\newtheorem{theorem}{Theorem}
\newtheorem{definition}[theorem]{Definition}
\newtheorem{corollary}[theorem]{Corollary}
\newcommand{\B}{\mathcal{B}}
\newcommand{\Newton}{\mathrm{Newton}}
\newcommand{\newword}[1]{\textbf{#1}}
\title[Schubert polynomials via  generalized permutahedra]{Schubert polynomials as integer point transforms of generalized permutahedra}
\author{Alex Fink}
\address{Alex Fink, School of Mathematical Sciences, Queen Mary University of London, UK, E1 4NS.
{a.fink@qmul.ac.uk}}
\author{Karola M\'esz\'aros}
\address{Karola M\'esz\'aros, Department of Mathematics, Cornell University, Ithaca NY 14853.  \newline{karola@math.cornell.edu}
}
\author{Avery St. Dizier}
\address{Avery St. Dizier, Department of Mathematics, Cornell University, Ithaca NY 14853.  \newline{ajs624@cornell.edu}
}
\thanks{Fink is partially supported by an Engineering and Physical Sciences Research Council grant (EP/M01245X/1),
and M\'esz\'aros by a National Science Foundation Grant (DMS 1501059).}
\date{\today}
\begin{document}

\begin{abstract} We show that the dual character of the flagged Weyl module of any diagram is a positively weighted integer point transform of a generalized permutahedron. In particular, 
Schubert  and key polynomials are positively weighted integer point transforms of generalized permutahedra. This implies several recent conjectures of Monical, Tokcan and Yong.  
\end{abstract}

\maketitle

\section{Introduction}
Schubert polynomials and key polynomials are classical objects in mathematics. Schubert polynomials, introduced  by Lascoux and Sch\"utzenberger in 1982 \cite{LS}, represent cohomology classes of Schubert cycles in flag varieties.  Key polynomials, also known as Demazure characters, are polynomials associated to compositions. Key polynomials were first introduced by Demazure for Weyl groups \cite{demazure}, and studied in the context of the symmetric group by Lascoux and Sch\"utzenberger in \cite{LS1,LS2}.

Beyond algebraic geometry, Schubert and key polynomials play an important role in algebraic combinatorics \cite{laddermoves,BJS,flaggedLRrule,FK1993, KM}. The second author and Escobar \cite{pipe1} showed that for permutations $1\pi'$ where $\pi'$ is dominant, Schubert polynomials are specializations of reduced forms in the subdivision algebra of flow and root polytopes. On the other hand, intimate connections of flow and root polytopes with generalized permutahedra have been exhibited by Postnikov \cite{beyond}, and more recently by the last two authors \cite{AK}.
These works imply that for permutations $1\pi'$ where $\pi'$ is dominant, 
the Schubert polynomial $\mathfrak{S}_{1\pi'}({\bf x})$ is equal to the  integer point transform of a generalized permutahedron \cite{AK}.

The main result of this paper proves that  the dual character of the flagged Weyl module of any diagram is a positively weighted integer point transform of a generalized permutahedron.  Since Schubert and key polynomials are dual characters of certain flagged Weyl modules, it follows that  the Newton polytope of any Schubert polynomial $\mathfrak{S}_\pi$ or key polynomial $\kappa_\alpha$ is a generalized permutahedron, and each of these polynomials is a sum over the lattice points in its Newton polytope with positive integral coefficients. 
 
After reviewing the necessary background, we prove our main theorem and draw corollaries about Schubert and key polynomials, confirming several recent conjectures of Monical, Tokcan and Yong \cite{MTY}. 
  
\section{Background}
\label{sec:bg} 

This section contains a collection of definitions of classical mathematical objects. Our basic notions are Schubert and key polynomials, Newton polytopes, generalized permutahedra, (Schubert) matroids, and flagged Weyl modules.

\subsection{Schubert  polynomials} 
The Schubert polynomial of the longest permutation $w_0=n \hspace{.1cm} n\!-\!1 \hspace{.1cm} \cdots \hspace{.1cm} 2 \hspace{.1cm} 1 \in S_n$ is 
\[\mathfrak{S}_{w_0}:=x_1^{n-1}x_2^{n-2}\cdots x_{n-1}.\]

For $w\in S_n$, $w\neq w_0$, there exists $i\in [n-1]$ such that $w(i)<w(i+1)$. 
For any such~$i$, the \newword{Schubert polynomial} $\mathfrak{S}_{w}$ is defined as 
\[\mathfrak{S}_{w}(x_1, \ldots, x_n):=\partial_i \mathfrak{S}_{ws_i}(x_1, \ldots, x_n),\] 
where $\partial_i$ is the $i$th divided difference operator
\[\partial_i (f):=\frac{f-s_if}{x_i-x_{i+1}} \mbox{ and }s_i=(i,i+1).\]
Since the $\partial_i$ satisfy the braid relations, the Schubert polynomials $\mathfrak{S}_{w}$ are well-defined. 

\subsection{Key polynomials}
A \newword{composition} $\alpha$ is a sequence of nonnegative integers $(\alpha_1,\alpha_2,\ldots)$ with $\sum_{k=1}^{\infty}\alpha_k<\infty$. If $\alpha$ is weakly decreasing, define the \newword{key polynomial} $\kappa_\alpha$ to be
\[\kappa_\alpha=x_1^{\alpha_1}x_2^{\alpha_2}\cdots. \]
Otherwise, set \[\kappa_\alpha = \partial_i\left( x_i\kappa_{\hat{\alpha}}\right)\mbox{ where } \hat{\alpha}=(\alpha_1,\ldots,\alpha_{i+1},\alpha_{i},\ldots) \mbox{ and }\alpha_i<\alpha_{i+1}. \]
It is an important fact due to Lascoux and Sch\"utzenberger \cite{LS1} that every Schubert polynomial is a sum of key polynomials. 

\subsection{Diagrams}
View $[n]^2$ as an $n$ by $n$ grid of boxes labeled $(i,j)$ in the same way as entries of an $n\times n$ matrix, with labels increasing as you move top to bottom along columns and left to right across rows from the upper-left corner. By a \textbf{diagram}, we mean a subset $D\subseteq [n]^2$, a collection of boxes in the $n\times n$ grid. Throughout this paper, we view $D$ as an ordered list of subsets $D=(D_1,D_2,\ldots,D_n)$ where for each $j$, $D_j=\{i:\, (i,j)\in D \}$ is the set of row indices of boxes of $D$ in column $j$. Two important classes of diagrams are Rothe diagrams and skyline diagrams.

\begin{definition}
	The \newword{Rothe diagram} of a permutation $\pi\in S_n$ is
	the collection of boxes $D(\pi)=\{(i,j):\, 1\leq i,j\leq n,\, \pi(i)>j,\, \pi^{-1}(j)>i \}$. 
	$D(\pi)$ can be visualized as the set of boxes left in the $n\times n$ grid 
	after you cross out all boxes weakly below or right of $(i,\pi(i))$ for each $i\in [n]$.
	Let $D(\pi)_j=\{i:\, (i,j)\in D(\pi) \}$ for each $j$, so $D(\pi)=(D(\pi)_1,\ldots, D(\pi)_n)$.
\end{definition}

See Figure \ref{fig:rothe} for an example of a Rothe diagram.

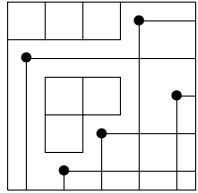
\begin{figure}[h] 
	\begin{tikzpicture}
	\draw (0,0)--(2.5,0)--(2.5,2.5)--(0,2.5)--(0,0);
	\draw[]
	(2.5,1.75)
	-- (0.25,1.75) node {$\bullet$}
	-- (0.25,0);
	\draw[]
	(2.5,0.25)
	-- (0.75,0.25) node {$\bullet$}
	-- (0.75,0);
	\draw[]
	(2.5,0.75)
	-- (1.25,0.75) node {$\bullet$}
	-- (1.25,0); 
	\draw[]
	(2.5,2.25)
	-- (1.75,2.25) node {$\bullet$}
	-- (1.75,0);
	\draw[]
	(2.5,1.25)
	-- (2.25,1.25) node {$\bullet$}
	-- (2.25,0);
	
	\draw (0,2)--(0.5,2)--(0.5,2.5);
	\draw (0.5,2)--(1,2)--(1,2.5);
	\draw (1,2)--(1.5,2)--(1.5,2.5);
	
	\draw (1,1)--(1.5,1)--(1.5,1.5)--(1,1.5)--(1,1)--(1,0.5)--(0.5,0.5)--(0.5,1)--(1,1);
	\draw (0.5,1)--(0.5,1.5)--(1,1.5);
	
	\end{tikzpicture} \label{fig:rothe}
 \caption{The Rothe diagram of $\pi=41532$
		is $(\{1\}, \{1,3,4\}, \{1,3\}, \emptyset, \emptyset)$.}
	\label{fig:rothe}
\end{figure}


\begin{definition}
	If $\alpha=(\alpha_1,\alpha_2,\ldots)$ is a composition, let $l=\max\{i:\,\alpha_i\neq 0 \}$ and $n=\max\{l,\alpha_1,\ldots, \alpha_l \}$. The \newword{skyline diagram} of $\alpha$ is the diagram $D(\alpha)\subseteq[n]^2$ containing the first $\alpha_i$ boxes in row $i$ for each $i\in[n]$. More specifically, $D(\alpha)=(D(\alpha)_1,\ldots, D(\alpha)_n)$ with $D(\alpha)_j=\{j\leq n:\, \alpha_j\geq j \}$ for each $j$.
\end{definition}

See Figure \ref{fig:sky} for an example of a skyline diagram.

\begin{figure}[h] 
	\begin{tikzpicture}
	\draw (0,0)--(2.5,0)--(2.5,2.5)--(0,2.5)--(0,0); 
	\draw (.5,2.5)--(.5,1); 
	\draw (1,2.5)--(1,1.5); 
	\draw (1.5,2.5)--(1.5,2); 
	\draw (0,2)--(1.5,2); 
	\draw (0,1.5)--(1,1.5); 
	\draw (0,1)--(.5,1);
	\draw (0,.5)--(.5,.5); 
	\draw (.5,.5)--(.5,0);
	
	
	\end{tikzpicture}\caption{The skyline diagram of $\alpha=(3,2,1,0,1)$ is $(\{1,2,3,5\}, \{1,2\}, \{1\}, \emptyset, \emptyset)$.}
	\label{fig:sky}
\end{figure}
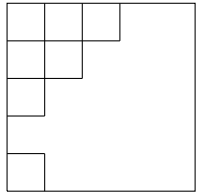

\subsection{Newton polytopes and generalized permutahedra} 
 
If $f$ is a polynomial in a polynomial ring whose variables are indexed by some set $I$,
the \newword{support} of $f$ is the lattice point set in $\mathbb R^I$ consisting of the exponent vectors of monomials with nonzero coefficient in~$f$.
The \newword{Newton polytope} $\Newton(f)\subseteq\mathbb R^I$ is the convex hull of the support of~$f$. Following the definition of \cite{MTY}, we say that a polynomial $f$ has \newword{saturated Newton polytope (SNP)} if every lattice point in $\Newton(f)$ is a vector in the support of $f$. In other words, SNP means that the polynomial is equal to a positively weighted integer point transform of its Newton polytope.

Our main objects of study are the supports of Schubert and  key polynomials. We prove that they have SNP and that their Newton polytopes are generalized permutahedra,   which we define next.  

The standard permutahedron is the polytope in $\mathbb{R}^n$ whose vertices consist of all permutations of the entries of the vector $(0,1,\ldots,n-1)$. A \textbf{generalized permutahedron} is a deformation of the standard permutahedron obtained by translating the vertices in such a way that all edge directions and orientations are preserved (edges are allowed to degenerate to points). Generalized permutahedra are parametrized by 
certain collections of real numbers $\{z_I\}$ indexed by nonempty subsets $I\subseteq [n]$, and have the form
\[P_n^z(\{z_I \})=\left \{ {\bf t}\in\mathbb{R}^n:\, \sum_{i\in I}{t_i}\geq z_I \mbox{ for } I\neq [n], \mbox{ and } \sum_{i=1}^{n}{t_i}=z_{[n]}   \right \}. \]  Postnikov initiated the study of these fascinating polytopes in \cite{beyond}, and they have since been studied extensively. 

The class of generalized permutahedra is closed under Minkowski sums. This follows from \cite[Lemma 2.2]{matroidpolytopes}:
\[P_n^z(\{z_I \})+P_n^z(\{z'_I \})=P_n^z(\{z_I+z'_I \}). \]

\subsection{Schubert matroids}

A \newword{matroid} $M$ is a pair $(E, \B)$ consisting of a finite set $E$ and a nonempty collection of subsets $\B$ of $E$, 
called the \newword{bases} of $M$. $\B$ is required to satisfy the \newword{basis exchange axiom}: 
If $B_1, B_2 \in \B$ and $b_1 \in B_1- B_2$, then there exists $b_2 \in B_2 - B_1$ such that $B_1 - b_1 \cup b_2 \in \B$. By choosing a labeling of the elements of $E$, we can assume $E=[n]$ for some $n$.

Fix positive integers $1 \leq s_1 < \ldots < s_r \leq n$. The sets $\{a_1, \ldots, a_r\}$ with $a_1<\cdots<a_r$ such that $a_1 \leq s_1, \ldots, a_r \leq s_r$ are the bases of a matroid, 
called the \newword{Schubert matroid} $SM_n(s_1, \ldots, s_r)$  \cite[Section 2.4]{OMbook}. 

\subsection{Matroid polytopes}
Given a matroid $M=(E,\mathcal{B})$ with $E=[n]$, the \newword{rank function} of $M$ is the function
\[r_M:2^{E}\to \mathbb{Z}_{\geq 0}\] defined by $r_M(S)=\max\{\#(S\cap B):\, B\in \mathcal{B} \}$. The sets $S\cap B$ where $S\subseteq [n]$ and $B\in\mathcal{B}$ are called the \newword{independent sets} of $M$.

The \newword{matroid polytope} of $M$ is the generalized permutahedron $P(M)$ defined by
\begin{align*}
	P(M)&=P^z_n\left(\{ r_M(E)-r_M(E\backslash I) \}_{I\subseteq E}\right)\\
	&=\left\{ {\bf t}\in\mathbb{R}^n:\, \sum_{i\in I}{t_i}\leq r_M(I) \mbox{ for } I\neq E, \mbox{ and } \sum_{i\in E}{t_i}=r_M(E) \right\}.
\end{align*}
The vertices of $P(M)$ are exactly the indicator vectors of the bases of $M$: if $B\in \mathcal{B}$ is a basis of $M$ and $\zeta^B=(\zeta_1^B,\ldots \zeta_n^B)\in\mathbb{R}^n$ is the vector with $\zeta_i^B=1$ if $i\in B$ and $\zeta_i^B=0$ if $i\notin B$ for each $i$, then 
\[P(M)=\mathrm{Conv}\{\zeta^B:\,B\in\mathcal{B} \}. \]

\subsection{Flagged Weyl modules}
Let $G=\mathrm{GL}(n,\mathbb{C})$ be the group of $n\times n$ invertible matrices over $\mathbb{C}$ and $B$ be the subgroup of $G$ consisting of the $n\times n$ upper-triangular matrices. The flagged Weyl module is a representation $M_D$ of $B$ associated to a diagram $D$, 
whose dual character has been shown in certain cases to be a Schubert polynomial or a key polynomial. 
We use the construction of $M_D$ in terms of determinants given in \cite{Magyar}.

Denote by $Y$ the $n\times n$ matrix with indeterminants $y_{ij}$ in the upper-triangular positions $i\leq j$ and zeros elsewhere. Let $\mathbb{C}[Y]$ be the polynomial ring in the indeterminants $\{y_{ij}\}_{i\leq j}$. Note that $G$ acts on $\mathbb{C}[Y]$ on the right via left translation: if $f({\bf y})\in \mathbb{C}[Y]$, then a matrix $g\in G$ acts on $f$ by $f({\bf y})\cdot g=f(g^{-1}{\bf y})$. For any $R,S\subseteq [n]$, let $Y_R^S$ be the submatrix of $Y$ obtained by restricting to rows $S$ and columns $R$.

For $R,S\subseteq [n]$, we say $R\leq S$ if $\#R=\#S$ and the $k$\/th least element of $R$ does not exceed the $k$\/th least element of $S$ for each $k$. For any diagrams $C=(C_1,\ldots, C_n)$ and $D=(D_1,\ldots, D_n)$, we say $C\leq D$ if $C_j\leq D_j$ for all $j\in[n]$.

\begin{definition}
	For a diagram $D=(D_1,\ldots, D_n)$, the \newword{flagged Weyl module} $M_D$ is defined by
	\[M_D=\mathrm{Span}_\mathbb{C}\left\{\prod_{j=1}^{n}\det\left(Y_{D_j}^{C_j}\right):\, C\leq D \right\}. \]
	$M_D$ is a $B$-module with the action inherited from the action of $B$ on $\mathbb{C}[Y]$. 
\end{definition}
Note that since $Y$ is upper-triangular, the condition $C\leq D$ is technically unncessary since $\det\left(Y_{D_j}^{C_j}\right)=0$ unless $C_j\leq D_j$.

\section{Newton Polytopes of Dual Characters of Flagged Weyl Modules}
\label{sec:schub}

For any $B$-module $N$, the \newword{character} of $N$ is given by  \[\mathrm{char}(N)(x_1,\ldots,x_n)=\mathrm{tr}\left(X:N\to N\right) \] 
where $X$ is the diagonal matrix $\mathrm{diag}(x_1,x_2,\ldots,x_n)$ with diagonal entries $x_1,\ldots,x_n$, and $X$ is viewed as a linear map from $N$ to $N$ via the $B$-action.

Define the \newword{dual character} of $N$ to be the character of the dual module $N^*$:
\begin{align*}
	\mathrm{char}^*(N)(x_1,\ldots,x_n)&=\mathrm{tr}\left(X:N^*\to N^*\right) \\
	&=\mathrm{char}(N)(x_1^{-1},\ldots,x_n^{-1}).
\end{align*}

\begin{theorem}[\cite{KP}] Let $w\in S_n$ be a permutation, $D(w)$ be the Rothe diagram of $w$, and $M_{D(w)}$ be the associated flagged Weyl module. Then, 
	\[\mathfrak{S}_w(x_1,\ldots,x_n) = \mathrm{char}^*M_{D(w)}. \]
\end{theorem}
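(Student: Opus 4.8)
The plan is to prove the identity by downward induction on the length $\ell(w)$, mirroring the recursive definition of $\mathfrak{S}_w$. The base case is $w=w_0$, and for the inductive step I fix any $w\neq w_0$ together with an $i$ satisfying $w(i)<w(i+1)$ and reduce everything to the single operator identity
\[\mathrm{char}^* M_{D(w)} = \partial_i\,\mathrm{char}^* M_{D(ws_i)}.\]
Since $w(i)<w(i+1)$ forces $\ell(ws_i)=\ell(w)+1$, the inductive hypothesis gives $\mathrm{char}^* M_{D(ws_i)}=\mathfrak{S}_{ws_i}$, and then the displayed identity together with the definition $\mathfrak{S}_w=\partial_i\mathfrak{S}_{ws_i}$ finishes the step. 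Throughout I use that $X=\mathrm{diag}(x_1,\dots,x_n)$ acts by $y_{ij}\mapsto x_i^{-1}y_{ij}$, so each spanning product $\prod_j\det(Y_{D_j}^{C_j})$ is an $X$-weight vector of weight $-\sum_i c_i e_i$, where $c_i=\#\{j: i\in C_j\}$ is the number of boxes of $C$ in row $i$; passing to $\mathrm{char}^*$ replaces this weight by $\sum_i c_i e_i$.

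For the base case I first record that $(i,j)\in D(w_0)$ iff $i+j\le n$, so column $j$ has row set $\{1,\dots,n-j\}$. For each such $j$ the only $C_j$ with $C_j\le D_j$ and $\#C_j=\#D_j$ is $C_j=\{1,\dots,n-j\}$ itself, whence $M_{D(w_0)}$ is one-dimensional, spanned by a single product of principal minors of $Y$. In this unique $C$ row $i$ occupies exactly $n-i$ columns, so the weight rule above gives $\mathrm{char}^* M_{D(w_0)}=x_1^{n-1}x_2^{n-2}\cdots x_{n-1}=\mathfrak{S}_{w_0}$, as required.

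The inductive step is the technical heart and has two ingredients. The first is combinatorial: when $w(i)<w(i+1)$, the Rothe diagram $D(ws_i)$ is obtained from $D(w)$ by interchanging rows $i$ and $i+1$ and adjoining a single box in row $i$; in particular rows $i$ and $i+1$ of $D(ws_i)$ are \emph{nested}, with $D(ws_i)_{i}\supseteq D(ws_i)_{i+1}$ as column sets. I would verify this directly from the characterization $(p,q)\in D(w)\iff q<w(p)$ and $w^{-1}(q)>p$, tracking the effect of swapping the values $w(i)<w(i+1)$. The second ingredient is representation-theoretic: restrict $M_{D(ws_i)}$ to the minimal parabolic $P_i\supseteq B$ attached to $s_i$ and analyze it for the corresponding copy $\mathfrak{sl}_2^{(i)}$. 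The nesting of rows $i,i+1$ is exactly the condition under which the spanning determinants organize into $\mathfrak{sl}_2^{(i)}$-strings, giving $M_{D(ws_i)}$ a $P_i$-stable filtration whose subquotients are Demazure modules for $SL_2^{(i)}$. Computing $\mathrm{char}^*$ on each subquotient reduces to the elementary $SL_2$ Demazure character identity, and summing these contributions yields precisely $\partial_i\,\mathrm{char}^* M_{D(ws_i)}=\mathrm{char}^* M_{D(w)}$.

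The main obstacle is this last step, because the determinantal spanning set of $M_{D(ws_i)}$ is linearly dependent and does not directly furnish weight-space bases, so it is not immediate that these vectors assemble into clean $\mathfrak{sl}_2^{(i)}$-strings. Overcoming it requires the structural fact---the core of the Kraśkiewicz--Pragacz argument---that the nesting in rows $i$ and $i+1$ of $D(ws_i)$ induces the required $P_i$-stable filtration, on whose subquotients the action of $\mathfrak{sl}_2^{(i)}$ is transparent. Once this filtration is established, the remainder of the induction is bookkeeping with the weight formula of the first paragraph. An alternative to the filtration argument would be to produce a straightened standard-monomial basis of $M_{D(ws_i)}$ via the Plücker relations and match its weight generating function against a known combinatorial expansion of $\mathfrak{S}_w$, but I expect the $SL_2$-filtration route to be the most economical.
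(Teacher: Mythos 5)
First, note that the paper does not prove this statement: it is quoted as a known theorem of Kra\'skiewicz and Pragacz \cite{KP} and used as a black box, so there is no in-paper argument to measure your attempt against. Judged on its own terms, your outline correctly identifies a viable strategy (downward induction on $\ell(w)$ following the defining recursion $\mathfrak{S}_w=\partial_i\mathfrak{S}_{ws_i}$), and the two ingredients you do carry out check out: $M_{D(w_0)}$ is indeed one-dimensional with dual character $x_1^{n-1}x_2^{n-2}\cdots x_{n-1}$, and for $w(i)<w(i+1)$ one does have $D(ws_i)=s_i\cdot D(w)\cup\{(i,w(i))\}$ with rows $i$ and $i+1$ of $D(ws_i)$ nested.

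There is nevertheless a genuine gap, and you have located it yourself: the identity $\mathrm{char}^*M_{D(w)}=\partial_i\,\mathrm{char}^*M_{D(ws_i)}$ is the entire content of the theorem, and your argument for it consists of asserting that ``the core of the Kra\'skiewicz--Pragacz argument'' supplies the needed $P_i$-stable filtration. Nothing in the proposal establishes (a) that $M_{D(ws_i)}$, a $B$-submodule of $\mathbb{C}[Y]$ presented by a linearly dependent determinantal spanning set, actually carries a $P_i$-action or admits a filtration with $SL_2^{(i)}$-Demazure subquotients; (b) how the module $M_{D(w)}$ --- a different module, attached to a diagram with one fewer box --- re-enters the computation, since a filtration of $M_{D(ws_i)}$ by itself only computes $\mathrm{char}^*M_{D(ws_i)}$; or (c) how the bookkeeping produces the non-isobaric operator $\partial_i$, given that the $SL_2$ Demazure character formula naturally yields the degree-preserving operator $f\mapsto\partial_i(x_if)$ --- the discrepancy is exactly the extra box $(i,w(i))$, and reconciling it is where the real work of \cite{KP} lies. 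As written, the proposal is an accurate road map to the literature rather than a proof.
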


\begin{theorem}[\cite{keypolynomials}] Let $\alpha$ be a composition, $D(\alpha)$ be the skyline diagram of $\alpha$, and $M_{D(\alpha)}$ be the associated flagged Weyl module. If $l=\max\{i:\,\alpha_i\neq 0 \}$ and $n=\max\{\alpha_1,\ldots,\alpha_l,l \}$, then 

\[\kappa_\alpha(x_1,\ldots,x_n) = \mathrm{char}^*M_{D(\alpha)}. \]
\end{theorem}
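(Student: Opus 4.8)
The plan is to induct on the number of inversions of $\alpha$, that is, on the number of pairs $a<b$ with $\alpha_a<\alpha_b$, which is the length of a shortest word sorting $\alpha$ into weakly decreasing order. The base case is $\alpha$ weakly decreasing, and the inductive step will reduce to showing that $\mathrm{char}^*$ obeys the very recursion that defines $\kappa_\alpha$. Throughout I fix $n$ large enough that all the diagrams and all the operators $\partial_i$ act on the same variable set $x_1,\ldots,x_n$.

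In the base case $\alpha$ is weakly decreasing, so every column of the skyline diagram is an initial segment: $D(\alpha)_j=\{i:\alpha_i\geq j\}=[k_j]$ with $k_j=\#\{i:\alpha_i\geq j\}$. Since $\det(Y_{D_j}^{C_j})=0$ unless $\#C_j=k_j$, and the only $C_j\leq[k_j]$ with $\#C_j=k_j$ is $[k_j]$ itself, the module $M_{D(\alpha)}$ is one-dimensional, spanned by $\prod_j\det(Y_{[k_j]}^{[k_j]})=\prod_j y_{11}y_{22}\cdots y_{k_jk_j}$. Tracking how $X=\mathrm{diag}(x_1,\ldots,x_n)$ scales this product (it rescales row $i$ of $Y$ by $x_i^{-1}$) shows that $X$ acts by the scalar $\prod_i x_i^{-\alpha_i}$, so $\mathrm{char}\,M_{D(\alpha)}=\prod_i x_i^{-\alpha_i}$ and hence $\mathrm{char}^*M_{D(\alpha)}=x^\alpha=\kappa_\alpha$, matching the base case of the definition of $\kappa_\alpha$.

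For the inductive step, suppose $\alpha_i<\alpha_{i+1}$ and let $\hat\alpha=(\ldots,\alpha_{i+1},\alpha_i,\ldots)$ be the swap, which has one fewer inversion. By induction $\mathrm{char}^*M_{D(\hat\alpha)}=\kappa_{\hat\alpha}$, so in view of the recursion $\kappa_\alpha=\partial_i(x_i\kappa_{\hat\alpha})$ it suffices to prove the operator identity
\[\mathrm{char}^*M_{D(\alpha)}=\partial_i\!\left(x_i\,\mathrm{char}^*M_{D(\hat\alpha)}\right).\]
This is now a statement purely about the two flagged Weyl modules. The diagrams agree outside rows $i,i+1$, and in the columns $\alpha_i<j\leq\alpha_{i+1}$ the diagram $D(\hat\alpha)$ has a box in row $i$ exactly where $D(\alpha)$ has one in row $i+1$; thus passing from $\hat\alpha$ to $\alpha$ slides a block of boxes from row $i$ down to row $i+1$. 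I would organize both modules with respect to the copy of $\mathrm{SL}_2$ acting on rows $i,i+1$ (the root subgroup for the simple root $e_i-e_{i+1}$), under which $\pi_i:=\partial_i(x_i\,\cdot\,)$ is the Demazure operator. The heart of the argument is to identify $M_{D(\hat\alpha)}$ with the extremal weight piece and $M_{D(\alpha)}$ with the full Demazure module obtained by letting this $\mathrm{SL}_2$ (and then $B$) act, by expanding each spanning determinant $\prod_j\det(Y_{D(\hat\alpha)_j}^{C_j})$ along rows $i,i+1$ and checking that the resulting span is stable and reproduces all of $M_{D(\alpha)}$. The Demazure character formula then yields the displayed identity, the dual substitution $x\mapsto x^{-1}$ being routine to carry through.

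I expect this module-theoretic identification to be the main obstacle: one must check that the naive determinant spanning set of $M_{D(\alpha)}$ is compatible with the $\mathrm{SL}_2$-action in the precise manner the Demazure operator demands, so that no extra weights intrude and $M_{D(\alpha)}$ is genuinely the Demazure module of $M_{D(\hat\alpha)}$ rather than merely agreeing with it on the extremal piece. This is essentially Magyar's orthodontia, and iterating the displayed identity along a word that sorts $\alpha$ exhibits $\mathrm{char}^*M_{D(\alpha)}$ as a product of Demazure operators applied to the monomial of the sorted composition, which is exactly the defining formula for $\kappa_\alpha$.
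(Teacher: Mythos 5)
The paper does not prove this statement at all; it is quoted from the literature (\cite{keypolynomials}), so there is no internal proof to compare yours against. Judged on its own terms, your proposal has a correct and complete base case: for weakly decreasing $\alpha$ every column of $D(\alpha)$ is an initial segment $[k_j]$, the only $C_j\leq [k_j]$ of the same cardinality is $[k_j]$ itself, and your weight computation giving $\mathrm{char}^*M_{D(\alpha)}=x^{\alpha}$ is right. The overall strategy (induction on inversions, reducing to the operator identity $\mathrm{char}^*M_{D(\alpha)}=\partial_i(x_i\,\mathrm{char}^*M_{D(\hat\alpha)})$) is also the standard route. But the inductive step is a plan, not a proof: the entire content of the theorem is concentrated in the claim that $M_{D(\alpha)}$ is the Demazure module generated from $M_{D(\hat\alpha)}$ by the minimal parabolic for $s_i$, \emph{and} that the Demazure character formula applies to it, and you explicitly defer both (``I would organize\dots'', ``the heart of the argument is to identify\dots'', ``I expect this\dots to be the main obstacle''). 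Neither is automatic. For an arbitrary $B$-module $V$ one does not have $\mathrm{char}(P_i\cdot V)=\partial_i(x_i\,\mathrm{char}\,V)$: a one-dimensional module whose character is $x_{i+1}$ has $\partial_i(x_i x_{i+1})=0$, while the module it generates is nonzero. So one genuinely needs the excellent-filtration / cohomology-vanishing input underlying Magyar's orthodontia, which is exactly the part you have not supplied.

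A smaller but real slip: the passage between characters and dual characters is not ``routine to carry through,'' because $\partial_i$ does not commute with the substitution $x_j\mapsto x_j^{-1}$. Writing $\iota$ for that substitution, one computes $\iota\,\partial_i\,\iota=-x_ix_{i+1}\partial_i$, and hence $\iota\circ(\partial_i x_i)\circ\iota\neq \partial_i x_i$; testing on $x_i$ gives $\partial_i(x_i\cdot x_i)=x_i+x_{i+1}$ whereas $-x_ix_{i+1}\partial_i(x_i^{-1}\cdot x_i)=0$. So whichever version of the Demazure character formula you establish for $\mathrm{char}\,M_{D(\alpha)}$ must be converted to the stated recursion for $\mathrm{char}^*$ with some care about upper- versus lower-triangular conventions; this is bookkeeping, but it has to be done explicitly rather than waved off.
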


\begin{definition}
	For a diagram $D\subseteq [n]^2$, let $\chi_D=\chi_D(x_1,\ldots,x_n)$ be the dual character 
	\[\chi_D=\mathrm{char}^*M_D. \] 
\end{definition}

\begin{theorem}
	\label{thm:Newtonofdualcharacter}
	Let $D=(D_1,\ldots, D_n)$ be a diagram. Then $\chi_D$ has SNP, and the Newton polytope of $\chi_D$ is the Minkowski sum of matroid polytopes
	\[\mathrm{Newton}(\chi_D)=\sum_{j=1}^{n}P(SM_n(D_j)). \]
	In particular, $\Newton(\chi_D)$ is a generalized permutahedron.
\end{theorem}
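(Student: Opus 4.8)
The plan is to read off the support of $\chi_D$ directly from the determinantal spanning set of $M_D$, recognize it as the lattice points of the claimed Minkowski sum, and obtain both conclusions at once. First I would note that each spanning element $\prod_{j}\det(Y_{D_j}^{C_j})$ is a weight vector for the diagonal torus. Acting by $X=\mathrm{diag}(x_1,\dots,x_n)$ substitutes $X^{-1}Y$ for $Y$, which scales row $a$ of $Y$ by $x_a^{-1}$; since $Y_{D_j}^{C_j}$ uses rows $C_j$, its determinant is an eigenvector of eigenvalue $\prod_{a\in C_j}x_a^{-1}=\mathbf{x}^{-\mathbf{e}_{C_j}}$, where $\mathbf{e}_{C_j}$ denotes the indicator vector of $C_j$. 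Hence $\prod_{j}\det(Y_{D_j}^{C_j})$ has weight $-\sum_j\mathbf{e}_{C_j}$, and as these weight vectors span $M_D$ we get $\mathrm{char}(M_D)=\sum_\nu\dim(M_D)_{-\nu}\,\mathbf{x}^{-\nu}$ and therefore $\chi_D=\sum_\nu\dim(M_D)_{-\nu}\,\mathbf{x}^{\nu}$. The coefficients are dimensions of weight spaces, hence nonnegative integers, so positivity of the eventual point transform comes for free.

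Next I would determine which monomials actually occur. As $\mathbb{C}[Y]$ is an integral domain, $\prod_j\det(Y_{D_j}^{C_j})$ is nonzero exactly when every factor is, and because $Y$ is upper-triangular this happens iff $C_j\leq D_j$ for all $j$. Thus $(M_D)_{-\nu}\neq 0$ precisely when $\nu=\sum_j\mathbf{e}_{C_j}$ for some $C\leq D$, so
\[ \mathrm{supp}(\chi_D)=\Big\{\,\textstyle\sum_{j=1}^n\mathbf{e}_{C_j}\;:\;C_j\leq D_j\text{ for all }j\,\Big\}. \]
The sets $C_j$ with $C_j\leq D_j$ are exactly the bases of the Schubert matroid $SM_n(D_j)$, so $\{\mathbf{e}_{C_j}:C_j\leq D_j\}$ is its set of basis indicator vectors; since $P(SM_n(D_j))\subseteq[0,1]^n$, these are precisely the lattice points of $P(SM_n(D_j))$. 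Hence $\mathrm{supp}(\chi_D)$ is the Minkowski sum of these basis-indicator sets, and taking convex hulls (using $\mathrm{Conv}(A+B)=\mathrm{Conv}(A)+\mathrm{Conv}(B)$) gives $\Newton(\chi_D)=\sum_{j=1}^n P(SM_n(D_j))$, which is a generalized permutahedron by the additivity of the $P^z_n$-description.

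It remains to establish SNP, i.e. that every lattice point of $\sum_j P(SM_n(D_j))$ lies in $\mathrm{supp}(\chi_D)$. One inclusion is immediate, since any sum of basis indicators is a lattice point of the Minkowski sum. The reverse inclusion---that every lattice point of the sum splits as a sum of one lattice point from each summand---is the crux, and I expect it to be the main obstacle, because lattice polytopes generally fail to have this integer decomposition property under Minkowski sum. The route I would take exploits that the summands are matroid polytopes: their Minkowski sum is the base polytope of the integral polymatroid with submodular rank function $f=\sum_j r_{SM_n(D_j)}$. I would then prove, by induction on the number of summands via a polymatroid exchange/greedy argument that peels off one matroid at a time, the classical fact that the lattice points of the base polytope of a sum of integral polymatroids are exactly the sums of bases of the summands. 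This yields $\big(\sum_j P(SM_n(D_j))\big)\cap\mathbb{Z}^n=\mathrm{supp}(\chi_D)$, so $\chi_D$ has SNP and equals the positively weighted integer point transform of its Newton polytope, completing the proof.
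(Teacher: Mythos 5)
Your proposal is correct and follows essentially the same route as the paper: the same torus-weight computation on the determinantal spanning set, the same identification of the support with sums of basis indicator vectors of the Schubert matroids $SM_n(D_j)$, and the same reduction of SNP to the fact that every lattice point of a Minkowski sum of matroid polytopes decomposes as a sum of lattice points of the summands. The only difference is that you correctly isolate this last fact as the crux and propose to prove it by a polymatroid exchange argument, whereas the paper simply cites it from Schrijver (Corollary 46.2c); either way it is the same key lemma.
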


\begin{proof}
	Denote by $X$ the diagonal matrix $\mathrm{diag}(x_1,x_2,\ldots,x_n)$. First, note that $y_{ij}$ is an eigenvector of $X$ with eigenvalue $x_i^{-1}$. Take a diagram $C=(C_1,\ldots,C_n)$ with $C\leq D$. Then, the element $\prod_{j=1}^{n}\det\left(Y_{D_j}^{C_j}\right)$ is an eigenvector of $X$ with eigenvalue 
	\[\prod_{j=1}^{n}\prod_{i\in C_j}x_i^{-1}.\]	
	Since $M_D$ is spanned by elements $\prod_{j=1}^{n}\det\left(Y_{D_j}^{C_j}\right)$ and each is an eigenvector of $D$, the monomials appearing in the dual character $\chi_D$ are exactly 
	\[\left\{\prod_{j=1}^{n}\prod_{i\in C_j}x_i:\, C\leq D \right\}. \]
	
	For a diagram $C=(C_1,\ldots, C_n)$, define a vector $\xi^C=(\xi_1^C,\ldots, \xi_n^C)$ by setting $\xi_i^C=\#\{j:\,i\in C_j\}$ for each $i$. The exponent vector of $\prod_{j=1}^{n}\prod_{i\in C_j}x_i$ is exactly $\xi^C$, so the support of $\chi_D$ is precisely the set $\left\{\xi^C:\, C\leq D \right\}$.

	However, for each $j\in[n]$, the sets $S\subseteq [n]$ with $S\leq D_j$ are exactly the bases of the Schubert matroid $SM_n(D_j)$. In particular, choosing a diagram $C\leq D$ is equivalent to picking a basis $C_j$ of $SM_n(D_j)$ for each $j\in[n]$. If $\zeta^{C_j}$ is the indicator vector of $C_j$, then comparing components shows
	\[\xi^C=\sum_{j=1}^{n}\zeta^{C_j}. \]
	This shows that each vector $\xi^C$ is a sum consisting of a vertex from each matroid polytope $P(SM_n(D_j))$ for $j\in [n]$. Conversely, given any sum $\sum_{j=1}^{n}\zeta^{B_j}$ of a vertex $\zeta^{B_j}$ from each $P(SM_n(D_j))$, let $C=(B_1,\ldots,B_n)$. Since each $B_j$ is a basis of $SM_n(D_j)$, $C\leq D$. Thus, $\xi^C=\sum_{j=1}^{n}\zeta^{C_j}$ is in the support of $\chi_D$.\\
	
	\noindent Consequently, 
	\[\Newton(\chi_D)= \sum_{j=1}^{n}P(SM_n(D_j)).  \]
	In particular, we have shown that each lattice point of $\Newton(\chi_D)$ corresponds to a sum consisting of a vertex from each $P(SM_N(D_j))$. It follows from \cite[Corollary 46.2c]{Schrijver} that each lattice point in this Minkowski sum is the sum of a lattice point in each summand $P(SM(D_j))$. However, the only lattice points in a matroid polytope are its vertices. Hence, $\chi_D$ has SNP.
\end{proof}

\begin{corollary}\label{thm:Schubert SNP}
	The support of any Schubert polynomial $\mathfrak{S}_{w}$ or key polynomial $\kappa_\alpha$
	equals the set of lattice points a generalized permutahedron.
\end{corollary}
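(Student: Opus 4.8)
The plan is to deduce the corollary directly from Theorem \ref{thm:Newtonofdualcharacter} by specializing to the Rothe and skyline diagrams. The bridge is the observation, recorded in the definition of SNP, that a polynomial has SNP exactly when it equals the positively weighted integer point transform of its Newton polytope; equivalently, when its support is precisely the set of lattice points of its Newton polytope. Thus, once the relevant polynomial is exhibited as $\chi_D$ for a suitable diagram $D$, Theorem \ref{thm:Newtonofdualcharacter} gives both that the polynomial has SNP and that its Newton polytope is a generalized permutahedron, and the corollary follows immediately.

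First I would treat the Schubert case. By the theorem \cite{KP} recalled above, for $w \in S_n$ the Schubert polynomial satisfies $\mathfrak{S}_w = \mathrm{char}^* M_{D(w)} = \chi_{D(w)}$, where $D(w) \subseteq [n]^2$ is the Rothe diagram of $w$. Applying Theorem \ref{thm:Newtonofdualcharacter} with $D = D(w)$ yields that $\chi_{D(w)}$ has SNP and that $\Newton(\chi_{D(w)}) = \sum_{j=1}^n P(SM_n(D(w)_j))$ is a generalized permutahedron. Because $\mathfrak{S}_w$ has SNP, its support is exactly the set of lattice points of this generalized permutahedron.

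Next I would handle the key case identically. By the theorem \cite{keypolynomials} recalled above, for a composition $\alpha$ the key polynomial satisfies $\kappa_\alpha = \mathrm{char}^* M_{D(\alpha)} = \chi_{D(\alpha)}$, where $D(\alpha) \subseteq [n]^2$ is the skyline diagram of $\alpha$ with $l = \max\{i : \alpha_i \neq 0\}$ and $n = \max\{\alpha_1,\ldots,\alpha_l,l\}$. Applying Theorem \ref{thm:Newtonofdualcharacter} with $D = D(\alpha)$ shows that $\kappa_\alpha$ has SNP and that $\Newton(\kappa_\alpha) = \sum_{j=1}^n P(SM_n(D(\alpha)_j))$ is a generalized permutahedron, whose lattice points therefore coincide with the support of $\kappa_\alpha$.

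No genuine obstacle arises: the corollary is a direct specialization of the main theorem, and the only care needed is to invoke the correct diagram — Rothe for $\mathfrak{S}_w$, skyline for $\kappa_\alpha$ — inside the correct ambient grid $[n]^2$, so that Theorem \ref{thm:Newtonofdualcharacter} applies verbatim without any further argument.
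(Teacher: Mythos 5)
Your proposal is correct and follows exactly the route the paper intends: identify $\mathfrak{S}_w$ with $\chi_{D(w)}$ and $\kappa_\alpha$ with $\chi_{D(\alpha)}$ via the cited theorems of \cite{KP} and \cite{keypolynomials}, then apply Theorem \ref{thm:Newtonofdualcharacter} to conclude SNP and that the Newton polytope is a generalized permutahedron. The paper leaves this deduction implicit, and your write-up supplies precisely the intended specialization with no gaps.
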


This confirms Conjectures 3.10 and 5.1 of \cite{MTY}, namely that key polynomials and Schubert polynomials have SNP. We now confirm Conjectures 3.9 and 5.13 of \cite{MTY}, which give a conjectural inequality description for the Newton polytopes of Schubert and key polynomials. We state this description and match it to the Minkowski sum description proven in Theorem \ref{thm:Newtonofdualcharacter}.

Let $D\subseteq [n]^2$ be any diagram with columns $D_j=\{i:\, (i,j)\in D \}$ for $j\in[n]$. Let $I\subseteq [n]$ be a set of row indices and $j\in[n]$ a column index. Construct a string $\mathrm{word}_{j,I}(D)$ by reading column $j$ of the $n$ by $n$ grid from top to bottom and recording
\begin{itemize}
	\item $($ if $(i,j)\notin D$ and $i\in I$;
	\item $)$ if $(i,j)\in D$ and $i\notin I$;
	\item $\star$ if $(i,j)\in D$ and $i\in I$.
\end{itemize}
Let $\theta_D^j(I)=\#\mbox{paired }()\mbox{'s in } \mathrm{word}_{j,I}(D) + \#\star\mbox{'s in }\mathrm{word}_{j,I}(D)$, and set 
\[\theta_D(I)=\sum_{j=1}^{n}\theta_D^j(I) .\]

\begin{definition}[\cite{MTY}]
	For any diagram $D\subseteq [n]^2$, define the Schubitope $\mathcal{S}_D$ by
	\[\mathcal{S}_D=\left\{(\alpha_1,\,\ldots,\,\alpha_n)\in \mathbb{R}_{\geq 0}^n:\,\sum_{i=1}^{n}\alpha_i=\#D \mbox{ and } \sum_{i\in I}\alpha_i\leq \theta_D(I) \mbox{ for all } I\subseteq [n] \right\}. \]
\end{definition}

\begin{theorem} \label{thm:schubi}
	Let $D$ be a diagram $D\subseteq [n]^2$ with columns $D_j=\left\{i:\,(i,j)\in D \right\}$ for each $j\in [n]$. The  Schubitope $\mathcal{S}_D$ equals the Minkowski sum of matroid polytopes
	\[
	\mathcal{S}_D=\sum_{j=1}^{n}{P\left(SM_n\left(D_j\right)\right)}.
	\]
\end{theorem}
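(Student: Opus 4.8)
The plan is to match the two inequality descriptions of $\mathcal{S}_D$ and $\sum_{j=1}^n P(SM_n(D_j))$ facet by facet, and to reduce the whole statement to a single per-column combinatorial identity. First I would rewrite the Minkowski sum as one generalized permutahedron. Writing $r_j:=r_{SM_n(D_j)}$, each summand is $P(SM_n(D_j))=P^z_n(\{r_j([n])-r_j([n]\setminus I)\}_I)$, so by the additivity of the $z$-parameters under Minkowski sum quoted from \cite[Lemma 2.2]{matroidpolytopes}, the sum is the generalized permutahedron with parameters $\sum_j\bigl(r_j([n])-r_j([n]\setminus I)\bigr)$. Translating this back into the inequality form of a matroid polytope gives
\[
\sum_{j=1}^n P(SM_n(D_j))=\left\{t\in\mathbb{R}^n:\ \sum_{i\in I}t_i\le\sum_{j=1}^n r_j(I)\ \text{for}\ I\ne[n],\ \sum_{i=1}^n t_i=\sum_{j=1}^n r_j([n])\right\}.
\]
Since $r_j([n])=\#D_j$, the equality reads $\sum_i t_i=\#D$, matching the defining equality of $\mathcal{S}_D$; and nonnegativity $t_i\ge 0$ holds automatically (each $P(SM_n(D_j))$ lies in the nonnegative orthant, so the Minkowski sum does too), matching the constraint $\alpha\in\mathbb{R}^n_{\ge 0}$ of the Schubitope. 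The $I=[n]$ inequality of $\mathcal{S}_D$ is redundant given the equality constraint. Comparing with the definition of $\mathcal{S}_D$, the theorem thus reduces to the identity $\theta_D(I)=\sum_{j=1}^n r_j(I)$ for all $I\subseteq[n]$, and since both sides split over columns it suffices to prove the per-column statement $\theta_D^j(I)=r_{SM_n(D_j)}(I)$.

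Next I would establish this combinatorial heart. Writing $D_j=\{s_1<\cdots<s_r\}$, I would realize $SM_n(D_j)$ as a transversal matroid: its bases are the systems of distinct representatives assigning to each box index $m$ a row $a_m\le s_m$, so a subset is independent exactly when it can be injectively matched to boxes of $D_j$ with each chosen row $i$ sent to a box lying weakly below it, i.e.\ in a row $\ge i$. Hence $r_{SM_n(D_j)}(I)$ is the maximum number of elements of $I$ admitting such a matching into the boxes of $D_j$. I would then read this matching directly off $\mathrm{word}_{j,I}(D)$: a $\star$ at row $i$ is simultaneously an element of $I$ and a box weakly below itself, so it may be matched to its own box; a $($ at row $i$ is an unmatched element of $I$ seeking a box strictly below it; and a $)$ at row $i$ is an available box. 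Thus matching a $($ to a later $)$ is precisely pairing an open parenthesis with a subsequent close parenthesis.

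The main obstacle is justifying that the maximum matching decomposes as \emph{every $\star$ self-matches, plus a maximum open-to-later-close pairing}, giving $r_{SM_n(D_j)}(I)=\#\star\text{'s}+\#\{\text{paired }()\}=\theta_D^j(I)$. I would prove this by a standard exchange argument: starting from any maximum matching, reroute it so each $\star$ uses its own box to serve its own demand, since any use of a $\star$'s box by an element above it, or of a $\star$'s demand by a box below it, can be swapped for self-matching without decreasing the total; after this the remaining unmatched elements are exactly the $($'s and the remaining free boxes are exactly the $)$'s, and a maximum matching of the former into weakly lower copies of the latter is computed by the greedy stack algorithm, whose value is the number of paired parentheses. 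This yields $\theta_D^j(I)=r_{SM_n(D_j)}(I)$; summing over $j$ and combining with the first paragraph identifies $\mathcal{S}_D$ with $\sum_{j=1}^n P(SM_n(D_j))$.
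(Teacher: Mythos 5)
Your reduction of the theorem to the per-column identity $\theta_D^j(I)=r_j(I)$ is exactly the paper's first step, and your overall argument is sound; where you genuinely diverge is in how that identity is proved. The paper stays inside the Schubert matroid itself: starting from the basis $D_j$, it performs one basis exchange for each paired $()$ in $\mathrm{word}_{j,I}(D)$, producing a chain $D_j\geq B_1\geq\cdots\geq B_p$ with $\#(I\cap B_p)=p+q$, and then proves maximality directly by showing that any further improving exchange would require an unpaired $($ before an unpaired $)$, whereas the leftover parentheses can only occur in the order $)($, which blocks the exchange. You instead present $SM_n(D_j)$ as the transversal matroid of the nested system $[s_1]\subseteq\cdots\subseteq[s_r]$, so that $r_j(I)$ becomes a maximum bipartite matching, normalize the matching so every $\star$ self-matches, and delegate the rest to the optimality of stack-based parenthesis pairing. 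Both routes are complete. The paper's version is self-contained and needs no facts about transversal matroids, but must argue maximality by hand; yours outsources both the lower and upper bounds to standard results (greedy matching for nested set systems, optimality of the stack algorithm) at the cost of first verifying that the transversal presentation really does yield $SM_n(D_j)$ --- a small Hall-type check you gloss over but which is routine for nested sets. If you write this up, do spell out that verification and the iteration of the $\star$-rerouting (one must check that fixing one $\star$ does not disturb the others), since those are the only places where your sketch leaves real work.
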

\begin{proof}
	Let $r_j$ be the rank function of the matroid $SM_n(D_j)$. By \cite[Lemma 2.2]{matroidpolytopes}, the Minkowski sum $\sum_{j=1}^{n}{P\left(SM_n\left(D_j\right)\right)}$ equals
	\[
	\left\{(\alpha_1,\,\ldots,\,\alpha_n)\in \mathbb{R}_{\geq 0}^n:\,\sum_{i\in[n]}\alpha_i=\sum_{j=1}^{n}r_j([n]) \mbox{ and } \sum_{i\in I}\alpha_i\leq \sum_{j=1}^{n} r_j(I)  \mbox{ for all } I\subseteq [n] \right\}.
	\]
	Thus, it is sufficient to prove that $\theta_D^j(I)=r_j(I)$ for each $j\in[n]$ and $I\subseteq[n]$. Fix $I$ and $j$, and let $\mathrm{word}_{j,I}(D)$ have $p$ paired $()$'s and $q$ $\star$'s.
	
	First, note that $D_j$ is a basis of $SM_n(D_j)$. Let $B$ be any basis of $SM_n(D_j)$ and pick elements $r_1$ and $r_2$ with $r_1\notin B$, $r_2\in B$, and $r_1<r_2$. Consider the set $B'=B\backslash\{r_2\}\cup\{r_1\}$. Then $B'\leq B\leq D_j$, so $B'$ is also a basis of $SM_n(D_j)$. Using this observation, we build a decreasing sequence of bases $D_j\geq B_1 \geq \cdots \geq B_p$.
	
	Order the set of paired $()$'s in $\mathrm{word}_{j,I}(D)$ from 1 to $p$. For the first pair, we get two grid squares $(r_1,\,j)$ and $(r_2,\,j)$ with $r_1<r_2$, $r_1\in I\backslash D_j$, and $r_2\in D_j\backslash I$. Define $B_1$ to be the basis $D_j\backslash \{r_2\}\cup\{r_1\}$. 
	
	Inductively, the $i$th set of paired $()$'s in $\mathrm{word}_{j,I}(D)$ gives two grid squares $(r_1,\,j)$ and $(r_2,\,j)$ with $r_1<r_2$, $r_1\in I\backslash B_{i-1}$, and $r_2\in (B_{i-1}\cap D_j)\backslash I$. Define $B_i$ to be the basis $B_{i-1}\backslash \{r_2\}\cup\{r_1\}$.
	
	By construction, $\#(I\cap B_p)=p+\#(I\cap D_j)=p+q$. The proof will be complete of we can show $I\cap B_p$ is a maximal independent subset of $I$. If not, there is some $k\in I\backslash B_p$ and $l\in (B_p\cap D_j)\backslash I$ such that $B_p\backslash\{l\}\cup\{k\}$ is a basis. If $k<l$, then $k$ and $l$ correspond to a $()$ in $\mathrm{word}_{j,S}(D)$, so $k\in B_p$ already, a contradiction. If $k>l$, then in $\mathrm{word}_{j,S}(D)$, $k$ and $l$ correspond to a subword $)($ where neither parenthesis was paired. Then, the position of $l$ in $B_p$ is the same as the original position of $l$ in $D_j$, since it cannot have been changed by any of the swaps. In this case, $k>l$ implies $B\backslash \{l\}\cup\{k\}$ is not a basis. 
\end{proof}

Theorem \ref{thm:schubi} confirms Conjectures 3.9 and 5.13 of \cite{MTY}.
 
\section*{Acknowledgements} We thank Bal\'azs Elek and Allen Knutson for inspiring conversations.

\bibliography{biblio-kir}
\bibliographystyle{plain}

\end{document}